\documentclass[12pt,oneside,english]{amsart}
\textwidth=13.5cm \textheight=24cm \hoffset=-1cm
\usepackage[latin1]{inputenc}
\usepackage{amssymb}

\makeatletter
\newtheorem{theorem}{Theorem}
\newtheorem{lemma}{Lemma}

\newtheorem{corollary}{Corollary}

\usepackage{babel}

\makeatother
\begin{document}
\baselineskip=17pt
\title[Fast computation of density of exponentially sequences]{A fast computation
 of density of exponentially $S$-numbers}

\author{Vladimir Shevelev}
\address{ Department of Mathematics \\Ben-Gurion University of the
 Negev\\Beer-Sheva 84105, Israel. e-mail:shevelev@bgu.ac.il}

\subjclass{11A41}

\begin{abstract}
The author \cite{4} proved that, for every set $S$ of positive integers containing 1
(finite or infinite) there exists the density $h=h(E(S))$ of the set $E(S)$
of numbers whose prime factorizations contain exponents only from $S,$  and gave an
explicit formula for $h(E(S)).$  In this paper we give an equivalent polynomial
formula for $\log h(E(S))$ which allows to get a fast calculation of $h(E(S)).$
\end{abstract}

\maketitle

\section{Introduction}

Let $\mathbf{G}$ be the set of all finite or infinite increasing sequences
of positive integers beginning with 1. For a sequence $S=\{s(n)\}, n\geq1,$ from
$\mathbf{G},$ a positive number $N$ is called an exponentially
$S$-number $(N\in E(S)),$ if all exponents in its prime power factorization are in
$S.$ The author \cite{4} proved that, for every sequence $S\in \mathbf{G},$
the sequence of exponentially $S$-numbers has a density
$h=h(E(S))\in [\frac{6}{\pi^2}, 1].$
More exactly, the following theorem was proved in \cite{4}:
\begin{theorem}\label{t1}
For every sequence $S\in \mathbf{G}$ the sequence of exponentially $S$-numbers has
a density $h=h(E(S))$ such that
\begin{equation}\label{1}
\sum_{i\leq x,\enskip i\in E(S)} 1 = h(E(S))x+O(\sqrt{x}\log x e^{c\frac{\sqrt{\log x}}
{\log \log x}}),
\end{equation}
with $c=4\sqrt{\frac{2.4}{\log 2}}=7.443083...$ and
\begin{equation}\label{2}
h(E(S))=\prod_{p}\left(1+\sum_{i\geq2}\frac{u(i)-u(i-1)}{p^i}\right),
\end{equation}
where the product is over all primes, $u(n)$ is the characteristic function of
sequence $S:\enskip u(n)=1,$ if\enskip
 $n\in S$ and $u(n)=0$ otherwise.
\end{theorem}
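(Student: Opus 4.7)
My plan is to express the Dirichlet series of $\chi_{E(S)}$ as $\zeta(s)F(s)$, read off the candidate density as $F(1)$, and then extract the error estimate via a Dirichlet convolution argument.

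First I would observe that membership of $n$ in $E(S)$ is a condition on each prime power in the factorization of $n$ independently, so $\chi_{E(S)}$ is multiplicative and its Dirichlet series factors as an Euler product
\[
\sum_{n\in E(S)} n^{-s}=\prod_{p}\Bigl(1+\sum_{a\geq 1}u(a)p^{-as}\Bigr).
\]
Multiplying each local factor by $(1-p^{-s})$ and using $u(1)=1$ to cancel the $p^{-s}$ contribution produces the identity $\sum_{n\in E(S)}n^{-s}=\zeta(s)F(s)$, where $F(s):=\prod_p\bigl(1+\sum_{i\geq 2}(u(i)-u(i-1))p^{-is}\bigr)$. Setting $s=1$ identifies the putative density as $h(E(S))=F(1)$, which is exactly formula (\ref{2}).

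Next, each Euler factor of $F(s)$ equals $1+O(p^{-2s})$, so the product converges absolutely in the half-plane $\mathrm{Re}(s)>1/2$. Writing $F(s)=\sum_d g(d)/d^s$, the multiplicative coefficients $g$ satisfy $g(p)=0$ and $g(p^k)=u(k)-u(k-1)\in\{-1,0,1\}$ for $k\geq 2$, so $g$ is supported on squarefull integers and $|g(d)|\leq 1$. The Dirichlet convolution identity $\chi_{E(S)}=g\ast 1$ then gives
\[
\sum_{n\leq x}\chi_{E(S)}(n)=\sum_{d\leq x}g(d)\lfloor x/d\rfloor = xF(1)-x\sum_{d>x}g(d)/d+O\bigl(\#\{d\leq x:d\text{ squarefull}\}\bigr).
\]
Invoking the classical Bateman--Golomb estimate $\#\{d\leq T:d\text{ squarefull}\}=O(\sqrt{T})$ bounds both correction terms by $O(\sqrt{x})$, so at this stage I already have (\ref{1}) with a crude error of order $\sqrt{x}$.

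The hard part is sharpening this to the exact bound in (\ref{1}) with the specific constant $c=4\sqrt{2.4/\log 2}$. For that I would abandon the elementary floor argument in favour of a Perron-type contour integral for the partial sums and push the contour towards $\mathrm{Re}(s)=1/2$, where $F(s)$ begins to grow. On that strip the size of $F(s)$ is governed by $\sum_p p^{-2\sigma}$, which blows up like $\log\log t$ as $\sigma\to 1/2$; balancing the length of the contour against this growth forces the factor $e^{c\sqrt{\log x}/\log\log x}$, and the explicit value of $c$ ultimately comes from Mertens-type sums over primes combined with a classical zero-free region for $\zeta(s)$. Pinning down this particular $c$, rather than producing a generic $x^{1/2+\varepsilon}$ bound, is the principal technical obstacle; the density identity (\ref{2}) and the basic $\sqrt{x}$ order of the error fall out almost for free from the convolution step.
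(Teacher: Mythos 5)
Your core argument is correct and, in fact, it already proves the whole theorem. Note first that this paper does not prove Theorem \ref{t1} at all: the statement is imported from \cite{4}, so the only possible comparison is with the (different) method of that earlier paper, which is what produces the particular error term in (\ref{1}). Your route is the standard elementary one and it is complete as far as the statement goes: since $1\in S$, $\chi_{E(S)}$ is multiplicative with $\chi_{E(S)}(p)=1$, hence $\chi_{E(S)}=g\ast 1$ with $g=\chi_{E(S)}\ast\mu$ multiplicative, $g(p)=0$ and $g(p^k)=u(k)-u(k-1)$ for $k\ge 2$; thus $g$ is supported on squarefull numbers, $|g(d)|\le 1$, and
$$\sum_{n\le x}\chi_{E(S)}(n)=\sum_{d\le x}g(d)\Bigl\lfloor \frac{x}{d}\Bigr\rfloor
= xF(1)-x\sum_{d>x}\frac{g(d)}{d}+O\bigl(\sqrt{x}\bigr),$$
where the last two terms are $O(\sqrt x)$ by the elementary count of squarefull numbers (write $d=a^2b^3$ with $b$ squarefree; the relevant names are Erd\H{o}s--Szekeres and Bateman--Grosswald, not Bateman--Golomb) plus partial summation for the tail. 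This yields (\ref{2}) and an error $O(\sqrt{x})$ uniformly in $S$.

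Where you go astray is the final paragraph, and it rests on a misreading of (\ref{1}): the factor $\log x\, e^{c\sqrt{\log x}/\log\log x}$ tends to infinity, so the stated error term is \emph{weaker} than your $O(\sqrt{x})$, and an upper bound with the specific constant $c=4\sqrt{2.4/\log 2}$ is implied a fortiori by any smaller bound. There is nothing to ``sharpen'': no Perron integral, contour shift, growth estimate for $F$ near $\mathrm{Re}(s)=1/2$, or zero-free region is needed, and the particular value of $c$ is an artifact of the method of \cite{4}, not a quantity your proof must reproduce. That proposed analytic step is also the only vague part of your write-up (it is far from clear the sketch would produce that constant); fortunately it can simply be deleted, leaving a correct and even stronger proof than the quoted statement requires.
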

In case when $S$ is the sequence of square-free numbers (see Toth \cite{6}) Arias de
Reyna [5,A262276], using the Wrench method of fast calculation [7], did the
calculation of $h$ with a very high degree of accuracy.
In this paper, using Wrench's method for formula (\ref{2}), we find a general representation of $h(E(S))$ based on a special polynomial over partitions of $n$ which allows to get
a fast calculation of $h(E(S))$ for every $S\in \mathbf{G}.$
Note also that Wrench's method was successfully realized in a special
case by Arias de Reyna, Brent and van de Lune in \cite{2}.\newline
\indent Everywhere below we write $\{h(E(S))\},$ understanding
$\{h(E(S))\}|_{S\in \mathbf{G}}.$
\section{A computing idea in Wrench's style}
Consider function given by power series
\begin{equation}\label{3}
F_S(x)=1+\sum_{i\geq2}(u(i)-u(i-1))x^i,\enskip x\in (0,\frac{1}{2}].
\end{equation}
Since $u(n)-u(n-1)\geq-1,$ then $F_S(x)\geq1-\frac{x^2}{1-x}>0.$
By (\ref{2}), we have
\begin{equation}\label{4}
h(E(S))=\prod_{p} F_S\left(\frac{1}{p}\right).
\end{equation}
and
\begin{equation}\label{5}
\log h(E(S))=\sum_{p}\log F_S(x)|_{x=\frac{1}{p}}.
\end{equation}
Let
\begin{equation}\label{6}
\log F_S(x)=\sum_{i\geq2}\frac{f_i^{(S)}}{i}x^i.
\end{equation}
Since $|u(n)-u(n-1)|\leq1,$ then by (\ref{3}), $F_S(x)\leq1+\frac{x^2}{1-x}$ \enskip and
$0<\log F_S(x)\leq2x^2, \enskip x\in (0,\frac{1}{2}].$ Thus the series (\ref{5})
is absolutely convergent. Now,  according to (\ref{5}) - (\ref{6}), we have
\begin{equation}\label{7}
\log h(E(S))=\sum_{n=2}^{\infty}\frac{f_n^{(S)}}{n}P(n),
\end{equation}
where $P(n)=\sum_{p}\frac{1}{p^n}$ is the prime zeta function.
The series (\ref{7}) is fast convergent and very suitable for the calculation of $h(E(S)).$
\section{A recursion for coefficients}
Denoting
\begin{equation}\label{8}
v_n=u(n)-u(n-1),\enskip n\geq2,
\end{equation}
by (\ref{3}) and (\ref{6}),
we have
\begin{equation}\label{9}
F_S(x)=1+\sum_{n\geq2}v_nx^n,
\end{equation}
\begin{equation}\label{10}
\log(1+\sum_{n\geq2}v_nx^n)=\sum_{i\geq2}\frac{f_i^{(S)}}{i}x^i.
\end{equation}
\begin{lemma}\label{L1}
Coefficients $\{f_n^{(S)}\}$ satisfy the recurrence
\newpage
\begin{equation}\label{11}
f_{n+1}^{(S)}=(n+1)v_{n+1}-\sum_{i=1}^{n-2} v_{n-i}f_{i+1}^{(S)}, \enskip n\geq1.
\end{equation}
\end{lemma}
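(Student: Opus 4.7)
The plan is to prove (\ref{11}) by differentiating (\ref{10}), clearing the denominator, and then comparing coefficients of $x^n$.

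First I would differentiate both sides of (\ref{10}) formally. The right-hand side becomes $\sum_{i\geq 2} f_i^{(S)} x^{i-1}$, while the left-hand side becomes
\[
\frac{\sum_{n\geq 2} n v_n x^{n-1}}{1+\sum_{n\geq 2} v_n x^n}.
\]
Multiplying through by the denominator $F_S(x)=1+\sum_{n\geq 2} v_n x^n$ converts the identity into an equality between two honest power series:
\[
\sum_{n\geq 2} n v_n x^{n-1} \;=\; \Bigl(\sum_{i\geq 2} f_i^{(S)} x^{i-1}\Bigr)\Bigl(1+\sum_{m\geq 2} v_m x^m\Bigr).
\]

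Next I would read off the coefficient of $x^n$ (for $n\geq 1$) on each side. On the left this is $(n+1)v_{n+1}$. On the right, expanding the product splits into two contributions: the ``$1$'' from the second factor yields $f_{n+1}^{(S)}$, and the double sum yields $\sum f_i^{(S)} v_m$ taken over pairs with $i\geq 2$, $m\geq 2$, and $i+m=n+1$. Reindexing via $i=j+1$ (so $j\geq 1$) forces $m=n-j$, and the constraint $m\geq 2$ becomes $j\leq n-2$. Thus the double sum is $\sum_{j=1}^{n-2} v_{n-j} f_{j+1}^{(S)}$, and solving for $f_{n+1}^{(S)}$ gives exactly (\ref{11}).

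There is essentially no serious obstacle here; the only subtlety is bookkeeping in the reindexing, and in particular verifying that the stated range $1\le i\le n-2$ produces the correct empty sums for the small cases $n=1,2$, where the recurrence should reduce to $f_2^{(S)}=2v_2$ and $f_3^{(S)}=3v_3$ (which indeed follow from the direct expansion $\log(1+v_2 x^2+v_3 x^3+\cdots)=v_2 x^2+v_3 x^3+\cdots$). Since $F_S(x)>0$ on $(0,\tfrac12]$ and $v_n$ has growth at most $1$, all manipulations are legitimate as identities of formal power series (and also as convergent series in a neighborhood of $0$), so no analytic justification beyond what was already established after (\ref{6}) is required.
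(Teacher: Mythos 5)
Your proof is correct and is essentially identical to the paper's: differentiate (\ref{10}), multiply by $F_S(x)$, and equate coefficients of $x^n$ to obtain $(n+1)v_{n+1}=f_{n+1}^{(S)}+\sum_{j=1}^{n-2}v_{n-j}f_{j+1}^{(S)}$. The only difference is cosmetic bookkeeping (your explicit check of the empty-sum cases $n=1,2$), which the paper leaves implicit.
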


\begin{proof}
Differentiating (\ref{10}), we have
$$\frac{\sum_{n\geq2}nv_nx^{n-1}}{F_S(x)}=\sum_{j>=1}f_{j+1}^{(S)}x^j.$$
Hence,
$$\sum_{n\geq2}nv_nx^{n-1}=(1+\sum_{n\geq2}v_nx^{n})(\sum_{j>=1}
f_{j+1}^{(S)}x^j ).$$
Equating the coefficients of $x^n$ in both sides, we get
$$ (n+1)v_{n+1}=f_{n+1}^{(S)}+\sum_{j=1}^{n-2} v_{n-j}f_{j+1}^{(S)}$$
and the lemma follows.
\end{proof}
\begin{corollary}\label{C1}
All $\{f_{n}^{(S)}\}$ are integers.
\end{corollary}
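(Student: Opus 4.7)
The plan is a straightforward strong induction on $n$, with Lemma \ref{L1} doing essentially all of the work. The point to notice first is that, since $u(n)\in\{0,1\}$, each $v_n=u(n)-u(n-1)$ lies in $\{-1,0,1\}$, so $v_n\in\mathbb{Z}$ for all $n\geq 2$. Thus every term of the form $(n+1)v_{n+1}$ in (\ref{11}) is automatically an integer, and only the $f$-values on the right side need to be controlled.

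For the base case I would read off (\ref{11}) at $n=1$: the inner sum is empty, so $f_2^{(S)}=2v_2\in\mathbb{Z}$. (One could also check $n=2$ gives $f_3^{(S)}=3v_3\in\mathbb{Z}$, again from an empty sum, as a sanity check that the indexing in (\ref{11}) behaves as advertised for small $n$.)

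For the inductive step, suppose that $f_{k+1}^{(S)}\in\mathbb{Z}$ for every $k\leq n-1$. Then each product $v_{n-i}f_{i+1}^{(S)}$ appearing in
\[
\sum_{i=1}^{n-2}v_{n-i}f_{i+1}^{(S)}
\]
is a product of two integers, so the whole sum is an integer. Combining this with the integrality of $(n+1)v_{n+1}$, the recurrence (\ref{11}) expresses $f_{n+1}^{(S)}$ as a $\mathbb{Z}$-linear combination of integers, hence $f_{n+1}^{(S)}\in\mathbb{Z}$, closing the induction.

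There is no real obstacle here: the only thing to watch is that one is genuinely entitled to invoke (\ref{11}) for every $n\geq 1$, including the edge cases where the sum on the right is empty, and that the $v_n$ are read as integers rather than as the mod-$2$ indicator differences. Everything else is bookkeeping.
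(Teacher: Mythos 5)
Your proof is correct and is essentially the paper's own argument made explicit: the paper also verifies the first few values from the recurrence (\ref{11}) and then appeals to induction, with integrality of the $v_n$ and of the empty-sum base cases doing the work exactly as you describe.
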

\begin{proof}
For n=1,2,3, by the recurrence (\ref{11}), we have
$$f_{2}^{(S)}=2v_2, f_{3}^{(S)}=3v_3, f_{4}^{(S)}=4v_4-2v_2^2;$$
now the corollary follows by induction.
\end{proof}

\section{Explicit polynomial formula}
To apply (\ref{10}) we need a fast way to generate the coefficients $f_i^{(S)}.$
Since, for $x\in (0,\frac{1}{2}],$ $\sum_{n\geq2}v_nx^n\leq\frac{x^2}{1-x}
\leq\frac{1}{2},$ then
\begin{equation}\label{12}
\log(1+\sum_{n\geq2}v_nx^n)=\sum_{m\geq1}\frac{(-1)^{m-1}}{m}(\sum_{n\geq2}v_nx^n)^m.
\end{equation}
Expanding these powers, we get a great sum of terms of type
\begin{equation}\label{13}
t_{\lambda_1,s_1}(v_{\lambda_1}x^{\lambda_1})^{s_1}... t_{\lambda_r,s_r}(v_{\lambda_r}
x^{\lambda_r})^{s_r},\enskip s_i\geq1, \lambda_i\geq2.
\end{equation}

When we collect all the terms with a fixed sum of exponents of $x,$ say, $n,$
we get a sum of terms (\ref{13}) with $\lambda_1s_1+...+\lambda_rs_r=n,$ i.e.,
we have $s_i$ parts $\lambda_i$ in partition of $n.$ Therefore, the considered expansion has the form
$$\log(1+\sum_{n\geq2}v_nx^n)=\sum_{n\geq2}(\sum_{\sigma\in \Sigma_n}t_{\sigma}v_{\sigma})\frac{x^n}{n}=\sum_{n\geq2}\frac{f_{n}^{(S)}}{n}x^n,$$
where $\Sigma_n$ is the set of the partitions $\{\sigma\}$ of $n$ with parts
 $\lambda_i\geq2$
 and $t_{\sigma}, v_{\sigma}$ are functions of partitions $\sigma$ defined
 by (\ref{13}) such that with every partition $\sigma$ of $n$ we associate
 the monomial
 \newpage
 \begin{equation}\label{14}
 v_{\sigma}=\prod_{i=1}^{r}v_{\lambda_i}^{s_i} \enskip (\lambda_1s_1+...+
 \lambda_rs_r=n,\enskip \lambda_i\geq2).
 \end{equation}

 So
\begin{equation}\label{15}
f_{n}^{(S)}=\sum_{\sigma\in \Sigma_n}t_{\sigma}v_{\sigma}.
\end{equation}
Substituting (\ref{15}) in equation (\ref{11}), we get
$$\sum_{\sigma\in \Sigma_{n+1}}t_{\sigma}v_{\sigma}=(n+1)v_{n+1}-
\sum_{i=1}^{n-2}v_{n-i}\sum_{\sigma\in \Sigma_{i+1}}t_{\sigma}v_{\sigma}=$$
\begin{equation}\label{16}
(n+1)v_{n+1}-
\sum_{j=2}^{n-1}v_{j}\sum_{\sigma\in \Sigma_{n+1-j}}t_{\sigma}v_{\sigma}.
\end{equation}
Note that, using (\ref{16}), one can proved that all coefficients $t_{\sigma}$
are integer numbers.
Let partition $\sigma=(b_2,...,b_{n+1})\in \Sigma_{n+1}$ contains $b_2$ elements 2, ..., $b_{n+1}$
elements $n+1$ such that $2b_2+...+(n+1)b_{n+1}=n+1, \enskip b_i\geq0.$ In
particular, evidently, $b_{n+1}=0\enskip or \enskip 1$ and in the latter case
all other $b_i=0.$ We shall write
$v_{\sigma}=v_2^{b_2}...v_{n+1}^{b_{n+1}}$ and $t_{\sigma}=t(v_2^{b_2}...
v_{n+1}^{b_{n+1}}).$  According to (16), the coefficient of the monomial
$v_2^0...v_n^0v_{n+1}^1$ equals $n+1,$ i. e., for partition of $n+1$ with only
part we have $t(\sigma)=n+1.$ We agree that $0^0=1.$

Denote by $\Sigma'_{n+1}$ the set of partitions of $n+1$ with parts $\geq2$ and
$\leq n.$ Then, by (\ref{16}), we have
\begin{equation}\label{17}
\sum_{\sigma\in \Sigma'_{n+1}}t_{\sigma}v_{\sigma}=-\sum_{j=2}^{n-1}v_{j}\sum_{\sigma\in \Sigma'_{n+1-j}}t_{\sigma}v_{\sigma}.
\end{equation}
 For every partition
 $(b_2,...,b_{n+1})\in \Sigma'_{n+1}$
 we have $b_{n+1}=0$ and $b_n=0$ (the latter since all parts $\geq2).$ Then
 (\ref{17}) leads to the formula:
$$t(v_2^{b_2}...v_{n-1}^{b_{n-1}}v_n^0v_{n+1}^0)=-t(v_2^{b_2-1}v_3^{b_3}...
v_{n-1}^{b_{n-1}}v_n^0v_{n+1}^0)-$$
\begin{equation}\label{18}
t(v_2^{b_2}v_3^{b_3-1}
...v_{n-1}^{b_{n-1}}v_n^0v_{n+1}^0)-...-
t(v_2^{b_2}v_3^{b_3}...v_{n-1}^{b_{n-1}-1}v_n^0v_{n+1}^0).
\end{equation}
Using (\ref{18}), we find an explicit formula for $f_{n}^{(S)}.$
\begin{lemma}\label{L2}
Let, for $n\geq3,$ $(b_2,...,b_{n-1},0,0)\in \Sigma'_{n+1}.$ Then
\begin{equation}\label{19}
t(v_2^{b_2}...v_{n-1}^{b_{n-1}}v_n^0v_{n+1}^0)=(-1)^{B_{n-1}-1}\frac{(B_{n-1}-1)!}{b_2!...b_{n-1}!}(n+1),
\end{equation}
where $B_{n-1}=b_2+...+b_{n-1}.$
\end{lemma}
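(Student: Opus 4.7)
The plan is induction on the number of parts $B=B_{n-1}=b_2+\cdots+b_{n-1}$. For each partition $\sigma=(b_2,\ldots,b_{n-1},0,0)\in\Sigma'_{n+1}$, the recursion (\ref{18}) expresses $t(\sigma)$ as a sum of $t(\sigma_j)$ over the indices $j$ with $b_j\geq 1$, where $\sigma_j$ is obtained from $\sigma$ by decreasing $b_j$ by $1$. Each such $\sigma_j$ has $B-1$ parts and total $n+1-j$, so the inductive hypothesis (applied to $\sigma_j$ regarded as a partition of $n+1-j$) will provide a closed form for $t(\sigma_j)$.

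For the base case I take $B=1$: the only such partition is the single-part one $v_{m+1}^1$ of $m+1$, and the paper already observes $t=m+1$, which matches (\ref{19}) with $B=1$ (both sides equal $m+1$). Note that when $B-1=1$ the reduced partition $\sigma_j$ itself is single-part and strictly speaking not in $\Sigma'_{n+1-j}$, but this single-part convention supplies the correct value for $t(\sigma_j)$, so the induction runs uniformly for all $B\geq 2$.

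For the inductive step, assume the formula for partitions with $B-1$ parts. Substituting into (\ref{18}) and noting that reducing $b_j$ by one turns $b_j!$ into $(b_j-1)!=b_j!/b_j$, I obtain
\begin{equation*}
 t(\sigma) = (-1)^{B-1}\frac{(B-2)!}{b_2!\cdots b_{n-1}!}\sum_{j\colon b_j\geq 1} b_j(n+1-j).
\end{equation*}
The remaining sum collapses by the basic partition identities $\sum_j b_j=B$ and $\sum_j j\, b_j=n+1$:
\begin{equation*}
 \sum_j b_j(n+1-j) = (n+1)B-(n+1) = (n+1)(B-1).
\end{equation*}
Absorbing the factor $B-1$ into $(B-2)!=(B-1)!/(B-1)$ yields precisely the right-hand side of (\ref{19}).

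The main obstacle is the bookkeeping: one has to track how the multinomial factor $1/(b_2!\cdots b_{n-1}!)$ responds to the removal of a single part $v_j$, and then recognize that multiplying by $b_j$ (which comes out of this multinomial shift) and by $n+1-j$ (which is the total of $\sigma_j$ appearing in the inductive formula) produces a sum that telescopes via the two elementary identities above. Once this one-line identity is in place the remaining manipulations are purely mechanical.
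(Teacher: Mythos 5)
Your proof is correct and follows essentially the same route as the paper: induction via the recursion (\ref{18}), applying the closed form to each reduced partition $\sigma_j$ (with total $n+1-j$), and collapsing the resulting sum through the identities $\sum_j b_j=B_{n-1}$ and $\sum_j j\,b_j=n+1$. Your explicit handling of the single-part convention as the base case $B=1$ is a slightly cleaner organization of the same induction the paper performs (whose base case is $n=3$, $t(v_2^2)=-2$), but the substance is identical.
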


\begin{proof} Let $n=3.$ We saw that $f_{4}^{(S)}=4v_4-2v_2^2.$ So, $t(v_2^{b_2})=-2$
with
\newpage
 $b_2=2$ and, by (\ref{19}), we also obtain $t(v_2^{b_2})=-2.$ Let the lemma holds
for $t(v_2^{c_2}...v_{n-1}^{c_{n-1}}),\enskip n\geq3,$ where all $c_i\leq b_i$ such that not
all equalities hold. Then, by the relaion (\ref{18}) and the induction supposition,
we have
$$t(v_2^{b_2}...v_{n-1}^{b_{n-1}})=-(-1)^{B_{n-1}-2}(\frac{(B_{n-1}-2)!}{(b_2-1)!b_3!...
b_{n-1}!}(n+1-2)+$$ $$\frac{(B_{n-1}-2)!}{b_2!(b_3-1)!...b_{n-1}!}(n+1-3)+...+\frac{(B_{n-1}-1)!}{b_2!b_3!...
(b_{n-1}-1)!}(n+1-(n-1))= $$
$$ (-1)^{B_{n-1}-1}\frac{(B_{n-1}-2)!}{b_2!...b_{n-1}!}(b_2(n+1-2)+b_3(n+1-3)+...+$$
$$b_{n-1}(n+1-(n-1))=(-1)^{B_{n-1}-1}\frac{(B_{n-1}-2)!}{b_2!...b_{n-1}!}
(B_{n-1}(n+1)-$$ $$(2b_2+
3b_3+...+(n-1)b_{n-1})$$
and, since $2b_2+3b_3+...+(n-1)b_{n-1}=n+1,$ the lemma follows.
\end{proof}
\begin{corollary}\label{C2}
Let, for $n\geq3,$ $(b_2,...,b_{n+1})\in \Sigma_{n+1}.$ Then
\begin{equation}\label{20}
t(v_2^{b_2}...v_{n+1}^{b_{n+1}})=(\delta(b_{n+1,1})+(-1)^{B_{n-1}-1}
\frac{(B_{n-1}-1)!}{b_2!...b_{n-1}!})(n+1),
\end{equation}
where $B_{n+1}=b_2+...+b_{n-1}.$
\end{corollary}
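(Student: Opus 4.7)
The plan is to reduce Corollary \ref{C2} to Lemma \ref{L2} by identifying precisely which partitions in $\Sigma_{n+1}$ lie outside $\Sigma'_{n+1}$, and then checking that the added Kronecker-delta term in (\ref{20}) accounts exactly for them while vanishing on the rest.

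First I would argue that, for $n\geq 3$, the only element of $\Sigma_{n+1}\setminus \Sigma'_{n+1}$ is the trivial one-part partition $(n+1)$, i.e.\ $b_{n+1}=1$ and $b_2=\cdots=b_n=0$. Indeed, any partition in $\Sigma_{n+1}\setminus \Sigma'_{n+1}$ must contain a part of size $>n-1$; two such parts would already sum to more than $n+1$, while a single part of size exactly $n$ would leave a residue of $1$, which cannot be written as a sum of parts $\geq 2$. So $b_{n+1}=1$ is the only possibility, and this is precisely the case that the new term $\delta(b_{n+1},1)$ in (\ref{20}) is designed to capture.

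Next, for this trivial partition I would apply (\ref{16}) at step $n$ directly: its right-hand side consists of the isolated term $(n+1)v_{n+1}$ plus a double sum of products $v_j v_{\sigma}$ with $j\geq 2$ and $\sigma\in\Sigma_{n+1-j}$, each of which contains at least two factors of the form $v_{\lambda_i}$ and therefore cannot produce the monomial $v_{n+1}^{1}$. Matching the coefficient of $v_{n+1}$ yields $t(v_{n+1})=n+1$, which is exactly what (\ref{20}) predicts with $B_{n-1}=0$ and $\delta(b_{n+1},1)=1$, provided one agrees that the second summand in (\ref{20}) is interpreted as $0$ when $B_{n-1}=0$ (equivalently, $1/(-1)!=0$ via $1/\Gamma(0)=0$). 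For every other partition in $\Sigma_{n+1}$ one has $b_{n+1}=0$, and then automatically $b_n=0$ (as already noted before Lemma \ref{L2}), so the Kronecker-delta term drops out and Lemma \ref{L2} supplies the remaining expression verbatim.

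The only mildly delicate point, rather than a genuine obstacle, is fixing this convention on the degenerate factorial so that formula (\ref{20}) uniformly covers the one-part case; once stated, the corollary follows at once from the split $\Sigma_{n+1}=\Sigma'_{n+1}\sqcup\{(n+1)\}$ together with Lemma \ref{L2} applied on $\Sigma'_{n+1}$.
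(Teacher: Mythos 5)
Your argument is correct and is essentially the paper's own proof, just spelled out in more detail: you split $\Sigma_{n+1}$ into $\Sigma'_{n+1}$ (where Lemma \ref{L2} applies and the delta term vanishes) plus the single one-part partition, whose coefficient $n+1$ is read off from the isolated term $(n+1)v_{n+1}$ in (\ref{16}). Your explicit remark that the combinatorial summand must be interpreted as $0$ when $B_{n-1}=0$ is a reasonable clarification of a degenerate case the paper passes over silently, but it does not change the substance of the argument.
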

\begin{proof}
The statement follows from Lemma \ref{L2} and addition of the coefficient $n+1$
of $v_{n+1}$ in equation (\ref{16}) in case when $\delta(b_{n+1,1})=1.$
\end{proof}
Now, using (\ref{7}), (\ref{15}), Corollary \ref{C2} and the initial values of the
coefficients $f_{2}^{(S)}=2v_2, \enskip f_{3}^{(S)}=3v_3,$ and changing $n$ by $n-1,$
we get a suitable formula to compute $\log h(E(S)).$
\begin{theorem}\label{t2}
We have
\begin{equation}\label{21}
\log h(E(S))=P(2)v_2+P(3)v_3+\sum_{n=4}^{\infty}P(n)(v_n+M(v_2,...,v_{n-2})),
\end{equation}
where $P(n)$ is the prime zeta function, $M$ is the polynomial defined as
$$M(v_2,...,v_{n-2})=\sum_{2b_2+...+(n-2)b_{n-2}=n}(-1)^{B_{n-2}-1}
\frac{(B_{n-2}-1)!}{b_2!...b_{n-2}!}
v_2^{b_2}...v_{n-2}^{b_{n-2}}, $$
where $B_{n-2}=b_2+...+b_{n-2}, \enskip b_i\geq0,\enskip i=2,...,n-2,
\enskip n\geq4.$
\end{theorem}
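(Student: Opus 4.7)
The plan is to assemble the formula by plugging the partition-level information of Corollary \ref{C2} into the series (\ref{7}), with the index shifted so that we describe $f_n^{(S)}$ rather than $f_{n+1}^{(S)}$.

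First I would isolate the low-order terms of (\ref{7}). Since $f_2^{(S)}=2v_2$ and $f_3^{(S)}=3v_3$, the $n=2$ and $n=3$ contributions to $\log h(E(S))$ are exactly $P(2)v_2$ and $P(3)v_3$. For $n\ge 4$ I would use (\ref{15}) in the form $f_n^{(S)}=\sum_{\sigma\in\Sigma_n}t_\sigma v_\sigma$, so that
\begin{equation*}
\frac{f_n^{(S)}}{n}=\frac{1}{n}\sum_{(b_2,\dots,b_n)\in\Sigma_n}t(v_2^{b_2}\cdots v_n^{b_n})\,v_2^{b_2}\cdots v_n^{b_n}.
\end{equation*}

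Next I would apply Corollary \ref{C2} with the substitution $n\mapsto n-1$: for every $(b_2,\dots,b_n)\in\Sigma_n$ one has
\begin{equation*}
t(v_2^{b_2}\cdots v_n^{b_n})=\Bigl(\delta(b_n,1)+(-1)^{B_{n-2}-1}\frac{(B_{n-2}-1)!}{b_2!\cdots b_{n-2}!}\Bigr)\,n,
\end{equation*}
where $B_{n-2}=b_2+\cdots+b_{n-2}$. The factor $n$ cancels the $1/n$ in (\ref{7}). The Kronecker part $\delta(b_n,1)$ contributes only through the one partition of $n$ consisting of the single block $n$ itself, producing the term $v_n$.

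The remaining contribution is the sum of the partition coefficients over partitions of $n$ with $b_n=0$. Here I would pause to verify the small bookkeeping point that such partitions automatically satisfy $b_{n-1}=0$ as well: since every part is $\ge 2$, having $b_{n-1}\ge 1$ would force $b_{n-1}=1$ and leave a residue $1$ to be split into parts $\ge 2$, which is impossible. Thus the surviving partitions are exactly those counted in $\Sigma_n$ with parts $\le n-2$, i.e.\ those in the defining sum of $M(v_2,\dots,v_{n-2})$. Collecting these pieces yields (\ref{21}).

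The main obstacle is not any single deep step but precisely the index juggling: aligning $\Sigma_n$ versus $\Sigma_{n+1}$, confirming the ``$B_{n-2}$'' in the final formula matches the ``$B_{n-1}$'' of Corollary \ref{C2} after the shift, and checking that the singleton partition accounts for the isolated $v_n$. Once those identifications are made cleanly, the theorem is just the rewriting of (\ref{7}).
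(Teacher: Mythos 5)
Your proposal is correct and follows essentially the same route as the paper, which obtains Theorem \ref{t2} precisely by combining (\ref{7}), (\ref{15}), Corollary \ref{C2} with $n$ replaced by $n-1$, and the initial values $f_2^{(S)}=2v_2$, $f_3^{(S)}=3v_3$. Your extra check that $b_n=0$ forces $b_{n-1}=0$ (so the surviving partitions are exactly those with parts at most $n-2$) is the same bookkeeping the paper handles via its set $\Sigma'_{n+1}$.
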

In particular, for $n=4,5,6,...,$ we have 
$$M(v_2)=-\frac{v_2^2}{2}, M(v_2,v_3)=-v_2v_3, M(v_2,v_3,v_4)=-v_2v_4-\frac{v_3^2}{2}+\frac{v_2^3}{3},\enskip...$$
For example, in case $n=6$ the diophantine equation $2b_2+3b_3+4b_4=6$ has 3
solutions \newpage
a) $b_2=1, b_3=0, b_4=1$ with $B_4=2;$\newline
\indent b) $b_2=0, b_3=2, b_4=0$ with $B_4=2;$\newline
\indent c) $b_2=3, b_3=0, b_4=0$ with $B_4=3.$\newline

Besides, using (\ref{11}), for $M_n=M_n(v_2,...,v_{n-2})$ we have the recursion
 \begin{equation}\label{22} 
M_2=0, M_3=0, M_n=-\frac{1}{n}\sum_{j=2}^{n-2}jv_{n-j}(v_j+M_j), \enskip n\geq4
\end{equation}
which, possibly, more suitable for fast calculations by Theorem \ref{t2}.
\section{Examples}
1) As we already mentioned, in case when $S$ is the sequence of square-free numbers,
Arias de Reyna [5,A262276] obtained
$$ h= \prod_{p}\left(1+\sum_{i\geq4}\frac{\mu(i)^2-\mu(i-1)^2}{p^i}\right)
=0.95592301586190237688...$$
By the results of \cite{1}, the coefficients $f_n^{(S)}$ (\ref{15}) in this case
(see A262400 \cite{5}) have very interesting congruence properties.\newline
\indent 2) The case of $S=2^n$ was essentially considered by the author \cite{3}. He found
 that $h=0.872497...$ The author asked Arias de Reyna to get more digits.
 Using Theorem \ref{t2}, he obtained
 $$h=0.87249717935391281355...$$
 \indent 3) Among the other several calculations by Arias de Reyna, we give the following
 one. Let $S$ be 1 and the primes (A008578 \cite{5}). Then
 $$h=0.94671933735527801046...  $$

 \section{Acknowledgement}
 The author is very grateful to Juan Arias de Reyna for an information of Wrench's
 method, useful discussions and his calculations by the formula of Theorem \ref{t2}.

\end{document}